%%%%%%%%%%%%%%%%%%%% May 26, 2014 %%%%%%%%%%%%
%%%%%%%%%%%%%%%%%%%%%%%%%%%%%%%%%%

\documentclass{amsart}
\usepackage{amsmath}
\usepackage{textcomp}
\usepackage{amsfonts}
\usepackage{amsthm,amscd}
\usepackage{amssymb,enumerate}
\usepackage{amsthm}
\usepackage{caption}
\usepackage{tikz,pgf}
\usepackage{stmaryrd}
\usepackage[all,cmtip]{xy}

\newcommand{\Jac}{\operatorname{Jac}}

\newcommand{\Spec}{\operatorname{Spec}}

\newcommand{\V}{\operatorname{V}}
\newcommand{\Max}{\operatorname{Max}}

\newcommand{\rr}{R\bowtie^f J}
\newcommand{\Nil}{\operatorname{Nil}}

\newcommand{\fn}{\frak{n}}
\newcommand{\fm}{\frak{m}}
\newcommand{\fp}{\frak{p}}

\newcommand{\fq}{\frak{q}}

\newtheorem{thm}{Theorem}[section]
\newtheorem{cor}[thm]{Corollary}
\newtheorem{lem}[thm]{Lemma}
\newtheorem{prop}[thm]{Proposition}
\newtheorem{defn}[thm]{Definition}
\newtheorem{exam}[thm]{Example}
\newtheorem{rem}[thm]{Remark}

\begin{document}
	
	\bibliographystyle{square}

\date{}

\author{Y. Azimi}

\address{Department of Mathematics, University of Tabriz,
	Tabriz, Iran.} \email{u.azimi@tabrizu.ac.ir}

\keywords{Amalgamated algebra, pm-rings, compactly packed rings, trivial extension}

\subjclass[2010]{Primary 13A15, 13B99, 13C05}

\title[spec of the amalgamations]{On the prime spectrum of the amalgamations}

\begin{abstract}
	Let $f:R\to S$ be a ring homomorphism and $J$ be an ideal of $S$. Then the subring $R\bowtie^fJ:=\{(r,f(r)+j)\mid r\in R$ and $j\in J\}$ of $R\times S$ is called the amalgamation of $R$ with $S$ along $J$ with respect to $f$. In this paper, we will deepen the study of the prime spectrum of the amalgamations and characterize when $R\bowtie^fJ$ is a pm-ring. We also investigate the transfer of compactly packed and properly zipped property on amalgamated constructions.
\end{abstract}

\maketitle

\section{Introduction}

The class of \emph{pm-rings} has been studied 
by De Marco and Orsatti in \cite{do}
as rings in which every prime ideal is contained in a unique maximal ideal. The concept has received much attention; 
see for example \cite{m}, \cite{c}, \cite{cc}, \cite{b}, and etc.

The \emph{prime avoidance} lemma is one of the most fundamental results in commutative algebra. In that context,
Reis and Viswanathan in \cite{rv} introduced 
the notion of  \emph{compactly packed} rings
as follows:
A ring $R$ is compactly packed, if
whenever an ideal $I$ of $R$ is contained in the union of a family of
prime ideals of $R$, $I$ is actually contained in one of the prime ideals of the family. 

Also, the set-theoretic dual of prime avoidance, called \emph{prime absorbance}, is well known.
Dual to compactly packed rings, 
the notion of \emph{properly zipped} rings is defined as 
rings in which, whenever a prime ideal contains an intersection of a family of prime ideals, it actually
contains one of them \cite{t}.

Let $R$ and $S$ be two commutative rings with unity,
$J$ be an ideal of $S$ and $f:R\to S$  be a ring homomorphism.
D'Anna, Finocchiaro, and Fontana in \cite{DFF} have introduced the following
subring
$$R\bowtie^fJ:=\{(r,f(r)+j)\mid r\in R\text{ and }j\in J\}$$ of
$R\times S$, called the \emph{amalgamated algebra} (or \emph{amalgamation}) of $R$ with $S$ along $J$
with respect to $f$. This construction generalizes the amalgamated
duplication of a ring along an ideal (introduced and studied in
\cite{DF}). Moreover, several classical constructions such as Nagata's
idealization (cf. \cite[page 2]{Na}), the $R + XS[X]$ and
the $R+XS\llbracket X \rrbracket$ constructions can be studied as
particular cases of this new construction (see \cite[Example 2.5 and Remark 2.8]{DFF}).
Amalgamation, in turn, can be realized as a pullback.
The construction has proved its worth 
providing numerous examples and counterexamples
in commutative ring theory (\cite{DFF1}, \cite{a},
 \cite{ass17}, \cite{f}, \cite{ass}, and etc.).
%%%%%

Prime ideals of $\rr$ are studied in \cite{DFF2}, \cite{DFF1}, \cite{a}, and \cite{ah}.
In this paper, we shall deepen the study of the prime spectrum of the amalgamations, providing some elementary but useful lemmas on
the \emph{inclusions} between ``a prime ideal" and ``unions and
intersections of prime ideals" of $\rr$ (Section 2).
In Section 3, we characterize when the amalgamated algebra is a  pm-ring (Theorem \ref{pmm}). Several corollaries and examples
illustrate various aspects of Theorem \ref{pmm}.
Section 4 is intended to study the transfer of
compactly packed and properly zipped properties.

\section{Preliminaries}
To facilitate the reading of the paper, in this section, we recall notations and
definitions we need later. We also explore inclusions
between a prime ideal of $\rr$ and
intersections and unions of prime ideals.

Let us first fix some notation which we shall use. 
For a commutative ring $A$, the set of
 nilpotent elements,
 prime ideals, 
  and maximal ideals of $A$ 
 will be denoted by 
 $\Nil (A)$,  $\Spec (A)$ and $\Max (A)$  
 respectively. $\V(I)$ denotes the set of
 prime ideals of $A$ containing $I$, and $\Jac (A)$ 
 stands for the Jacobson radical of $A$.
%For a multiplicatively closed subset $T$ of $A$, we  use the notation $T^{-1}A$ to denote the ring of fractions of $A$ with respect to $T$.
%For abbreviation, we write ``iff" instead of ``if and only if".
%Our notation is standard. If R is a ring, Min(R) and Max(R) denote respectively the set of all minimal prime and maximal ideals of R while Tot(R) is the total quotient ring of R.

Throughout, we assume that $R$ and $S$ are two commutative rings with unity,
$J$ is a non-zero
proper ideal of $S$ and $f:R\to S$  is a ring homomorphism.
In the sequel, we will use the following
remark, mostly without explicit mention.
\begin{rem}\label{spec}
	\textnormal{(\cite[Proposition 2.6]{DFF2})
		For $\fp\in\Spec(R)$ and $\fq\in\Spec(S)\setminus \V(J)$, set
		\begin{align*}
		\fp^{\prime_f}:= & \fp\bowtie^fJ:=\{(p,f(p)+j)\mid p\in \fp, j\in J\}, \\[1ex]
		\overline{\fq}^f:= & \{(r,f(r)+j)\mid r\in R, j\in J, f(r)+j\in \fq\}.
		\end{align*}
		Then, the following statements hold.
		\begin{itemize}
			\item 
			$\Spec(R\bowtie^fJ)=\{\fp^{\prime_f}\mid \fp\in\Spec(R)\}\cup\{\overline{\fq}^f\mid \fq\in\Spec(S)\setminus V(J)\}$,
			\item $\Max(R\bowtie^fJ)=\{\fp^{\prime_f}\mid \fp\in\Max(R)\}\cup\{\overline{\fq}^f\mid \fq\in\Max(S)\setminus V(J)\}$.
		\end{itemize}
		We call prime (respectively, maximal) ideals of the form
		$\fp^{\prime_f}$ as \emph{type 1} prime (respectively, maximal) ideals, and prime (respectively, maximal) ideals of 
		the form $\overline{\fq}^f$ as \emph{type 2} prime (respectively, maximal) ideals.
				Note that $\rr$ does not have any 
		type 2 prime ideals (respectively, type 2 maximal ideals)
		  if and only if 
		  $J\subseteq \Nil (S)$ (respectively, $J\subseteq \Jac (S)$).
		 }
\end{rem}

The following lemmas, which are of independent interest, have key role in the proof of  results in Section 3 and Section 4.
In some parts of  the  lemmas, we will use the concept of compactly packed property,
which comes in Definition \ref{def}.
Let us start with investigating when a prime ideal
of $\rr$ is contained in a union of prime ideals of $\rr$.

\begin{lem}\label{cp1}
	Let $\fp ,\{\fp_\alpha\}_{\alpha \in \Lambda} \in\Spec(R)$ and
	$\fq, \{\fq_\alpha\}_{\alpha \in \Lambda} \in\Spec(S)\setminus \V(J)$. Then  
	\begin{itemize}
		\item [(1)]
		$ \fp^{\prime_f}\subseteq \cup_{\alpha \in \Lambda}\fp_{\alpha}^{\prime_f}$
		if and only if
		$\fp\subseteq \cup_{\alpha \in \Lambda}\fp_{\alpha}$. 
		\item [(2)]
		If
		$\fq\subseteq \cup_{\alpha \in \Lambda}\fq_{\alpha}$,
		then
		$\ \overline{\fq}^f\subseteq \cup_{\alpha \in \Lambda}   \overline{\fq _{\alpha}}^f$.
		The converse  holds if
		either ``$f$ is surjective" or ``$\Spec(S)\setminus \V(J)$ is compactly packed".
		\item [(3)] 
		The inverse inclusions of parts (1) and (2) also hold (without the additional assumptions for part (2)).
		\item [(4)]
		$ \overline{\fq}^f \subseteq \cup_{\alpha \in \Lambda}\fp_{\alpha}^{\prime_f}$ if and only if $f^{-1}(\fq + J) \subseteq \cup_{\alpha \in \Lambda}\fp_{\alpha}$.
		\item [(5)]
		Let $\Spec(S)\setminus \V(J)$ be compactly packed.
		Then, the following always holds:
		$\fp^{\prime_f}_{\alpha} \nsubseteq \cup_{\alpha \in \Lambda}  \overline{\fq_\alpha}^f$.
	\end{itemize}
\end{lem}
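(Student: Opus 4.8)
The plan is to prove the five parts in order, exploiting the explicit descriptions of $\fp^{\prime_f}$ and $\overline{\fq}^f$ from Remark \ref{spec} and translating each inclusion of ideals of $\rr$ into a statement about ideals of $R$ (or $S$). For part (1), I would argue directly: if $(p,f(p)+j)\in\fp^{\prime_f}$ then, since $J$ is captured by the ideal $\fp_\alpha^{\prime_f}$ for free (each $\overline{\fp_\alpha}$-type-1 ideal contains $\{0\}\bowtie^f J$), membership of $(p,f(p)+j)$ in $\fp_\alpha^{\prime_f}$ is equivalent to $p\in\fp_\alpha$; hence $\fp^{\prime_f}\subseteq\bigcup\fp_\alpha^{\prime_f}$ forces $\fp\subseteq\bigcup\fp_\alpha$, and the reverse is immediate. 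For part (2), the forward direction is again a routine chase using $f(r)+j\in\fq\Rightarrow f(r)+j\in\fq_\alpha$ for the appropriate $\alpha$. The converse is where the extra hypotheses enter: I would show $\overline{\fq}^f\subseteq\bigcup\overline{\fq_\alpha}^f$ gives $\fq\cap(f(R)+J)\subseteq\bigcup\bigl(\fq_\alpha\cap(f(R)+J)\bigr)$, and then, when $f$ is surjective $f(R)+J=S$ and we are done immediately; when instead $\Spec(S)\setminus\V(J)$ is compactly packed, I would first pass to the ideal $\fq\cap(f(R)+J)$ of $S$, note that its radical is $\fq$ (since $\fq\notin\V(J)$, so $\fq+ (\text{stuff})$ still meets $f(R)+J$ densely — more carefully, $\fq=\sqrt{\fq\cap(f(R)+J)}$ because $f(R)+J$ is not contained in $\fq$), and use compact packedness of $\Spec(S)\setminus\V(J)$ on the ideal $\fq\cap(f(R)+J)$ together with the family $\{\fq_\alpha\}$ to land inside a single $\fq_\alpha$.

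Part (3) is purely formal: reversing all inclusions in the arguments for (1) and (2) needs no compact-packedness, because the implication ``$\fp\supseteq\bigcup\fp_\alpha$'' $\Leftrightarrow$ ``$\fp^{\prime_f}\supseteq\bigcup\fp_\alpha^{\prime_f}$'' and its type-2 analogue only use the elementwise descriptions, and a reverse inclusion of a union into a fixed ideal is equivalent to each member being so contained, which the correspondence handles term by term. For part (4), the key observation is that $\overline{\fq}^f$ consists of pairs $(r,f(r)+j)$ with $f(r)+j\in\fq$, and its image under the first projection $\rr\to R$ is exactly $f^{-1}(\fq+J)$ (indeed $r\in f^{-1}(\fq+J)$ iff there is $j\in J$ with $f(r)+j\in\fq$); since each $\fp_\alpha^{\prime_f}$ is the full preimage of $\fp_\alpha$ under this projection, the inclusion $\overline{\fq}^f\subseteq\bigcup\fp_\alpha^{\prime_f}$ is equivalent to the first-coordinate inclusion $f^{-1}(\fq+J)\subseteq\bigcup\fp_\alpha$, with the (⇐) direction using that $\overline{\fq}^f$ is generated in the first coordinate.

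Part (5) is the one I expect to be the main obstacle, and it should be argued by contradiction: suppose some type-1 ideal $\fp_\beta^{\prime_f}$ were contained in $\bigcup_\alpha\overline{\fq_\alpha}^f$. Projecting to $S$ (second coordinate), $\fp_\beta^{\prime_f}$ maps onto $f(\fp_\beta)+J$, an ideal of $f(R)+J$ that visibly contains $J$; each $\overline{\fq_\alpha}^f$ maps into $\fq_\alpha$, which does \emph{not} contain $J$. So the hypothetical inclusion would force the $S$-ideal generated by $J$ (equivalently $J$ itself, or its extension) to lie inside $\bigcup_\alpha\fq_\alpha$ — but applying compact packedness of $\Spec(S)\setminus\V(J)$ to this ideal would put $J$ inside a single $\fq_\alpha$, i.e.\ $\fq_\alpha\in\V(J)$, contradicting $\fq_\alpha\in\Spec(S)\setminus\V(J)$. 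The delicate point is to make the projection argument yield a genuine ideal-theoretic containment $I\subseteq\bigcup\fq_\alpha$ with $I\not\subseteq\fq_\alpha$ for every $\alpha$ (one must be careful that projecting a containment of sets still gives a containment, and that $J$, though possibly not an ideal of $f(R)+J$ in the naive sense, generates an ideal not contained in any $\fq_\alpha$); once that is set up, compact packedness delivers the contradiction cleanly. Throughout I would keep the running hypothesis $J$ a non-zero proper ideal so that type-2 primes actually exist and $\V(J)\neq\Spec(S)$.
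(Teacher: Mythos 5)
Most of your plan coincides with the paper's proof: parts (1), (4), and the forward half of (2) are the same element/projection chases, and your part (5), though phrased as a contradiction, rests on exactly the paper's key point --- compact packedness applied to the ideal $J$ itself forces $J\nsubseteq\cup_{\alpha}\fq_{\alpha}$ (otherwise $J\subseteq\fq_{\alpha}$ would contradict $\fq_{\alpha}\notin\V(J)$); the paper then simply picks $v\in J\setminus\cup_{\alpha}\fq_{\alpha}$ and exhibits $(0,v)$. For (3), note that the type-2 step ``$\overline{\fq_{\alpha}}^f\subseteq\overline{\fq}^f\Rightarrow\fq_{\alpha}\subseteq\fq$'' is not purely formal from the elementwise descriptions: it needs the multiply-by-$v$ trick of Lemma \ref{0}(2) with $v\in J\setminus\fq$, which is available without extra hypotheses precisely because only the single target $\fq$ must avoid $J$.

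The genuine gap is in the converse of (2) under the compact-packedness hypothesis. First, $\fq\cap(f(R)+J)$ is an ideal of the subring $f(R)+J$, not of $S$, so Definition \ref{def} (which quantifies over ideals of $S$) does not apply to it as stated; and if you replace it by the $S$-ideal it generates, the containment in $\cup_{\alpha}\fq_{\alpha}$ is no longer guaranteed, since a union of primes is not an ideal. Second, the claim $\fq=\sqrt{\fq\cap(f(R)+J)}$ (or of its generated ideal) is false in general: take $R=k$, $S=k[x,y]$, $f$ the inclusion, $J=(x)$, $\fq=(y)$; then $\fq\cap\bigl(k+(x)\bigr)$ generates $(xy)$, whose radical is $(x)\cap(y)\neq(y)$. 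So even after landing your ideal in a single $\fq_{\alpha}$, you cannot conclude $\fq\subseteq\fq_{\alpha}$ by the radical argument you propose. Two repairs: (a) the paper's route --- apply compact packedness to $J$ itself to obtain $v\in J\setminus\cup_{\alpha}\fq_{\alpha}$; then for $a\in\fq$ one has $(0,av)\in\overline{\fq}^f\subseteq\cup_{\alpha}\overline{\fq_{\alpha}}^f$, hence $av\in\fq_{\alpha}$ for some $\alpha$, and primality with $v\notin\fq_{\alpha}$ gives $a\in\fq_{\alpha}$; or (b) keep your projection step but work with the genuine $S$-ideal $\fq J\subseteq\fq\cap(f(R)+J)\subseteq\cup_{\alpha}\fq_{\alpha}$, apply compact packedness to get $\fq J\subseteq\fq_{\beta}$ for some $\beta$, and then use $J\nsubseteq\fq_{\beta}$ together with primality to conclude $\fq\subseteq\fq_{\beta}$.
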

\begin{proof}
	(1) follows immediately
	from the definition of $\fp^{\prime_f}$.\\
	(2) 
	If
	$\fq\subseteq \cup_{\alpha \in \Lambda}\fq_{\alpha}$,
	then it follows,
	from the definition of $\overline{\fq}^f$, that
	$\ \overline{\fq}^f\subseteq \cup_{\alpha \in \Lambda}\overline{\fq_{\alpha}}^f$.
	Conversely,
	assume that $\overline{\fq}^f\subseteq \cup_{\alpha \in \Lambda}\overline{\fq_{\alpha}}^f$.
	Suppose first that $f$ is surjective. Then, for any $f(r)\in\fq$, we have
	$(r,f(r))\in \overline{\fq}^f\subseteq \cup_{\alpha \in \Lambda}\overline{\fq_{\alpha}}^f$.
	Hence $f(r)\in \cup_{\alpha \in \Lambda}\fq_{\alpha}$, as desired. 
	Next, let $\Spec(S)\setminus \V(J)$ be compactly packed.
	Then, $J\nsubseteq \cup_{\alpha \in \Lambda}\fq_{\alpha}$, and one may
	pick  $v\in J\setminus \cup_{\alpha \in \Lambda}\fq_{\alpha}$.
	For any $a\in \fq$,
	we have $(0,av)\in \overline{\fq}^f\subseteq \cup_{\alpha \in \Lambda}\overline{\fq_{\alpha}}^f$,
	which implies
	$av\in  \cup_{\alpha \in \Lambda}\fq_{\alpha}$,
	and so 
	$a\in  \cup_{\alpha \in \Lambda}\fq_{\alpha}$, as desired.
	\\
	(3) 
	The proof is similar to previous parts; omitted.\\
	(4) ($\Rightarrow$): Let $r\in f^{-1}(\fq + J)$, say
	$f(r)= a+i$  for some $a\in q$, $i\in J$.
	Hence $(r,f(r)-i)\in \overline{\fq}^f \subseteq \cup_{\alpha \in \Lambda}\fp_{\alpha}^{\prime_f}$,
	hence that 
	$r\in \cup_{\alpha \in \Lambda}\fp_{\alpha}$.\\
	($\Leftarrow$): Let $(r,f(r)+i)\in \overline{\fq}^f$,
	which implies $f(r)+i =a \in \fq$.
	Then $r\in f^{-1}(\fq + J) \subseteq \cup_{\alpha \in \Lambda}\fp_{\alpha}$, and so 
	$(r,f(r)+i)\in \cup_{\alpha \in \Lambda}\fp_{\alpha}^{\prime_f}$.\\
	(5)
	By assumption,
	we may pick
	$v\in J \setminus \cup_{\alpha \in \Lambda}\fq_{\alpha}$.
	Then,
	$(0,v)\in \fp^{\prime_f}_{\alpha} \setminus \cup_{\alpha \in \Lambda}  \overline{\fq_\alpha}^f$.
\end{proof}

We are now going to study when a prime ideal
of $\rr$ contains an intersection of prime ideals.

\begin{lem}\label{pz1}
	Let $\fp ,\{\fp_\alpha\}_{\alpha \in \Lambda} \in\Spec(R)$ and
	$\fq, \{\fq_\alpha\}_{\alpha \in \Lambda} \in\Spec(S)\setminus \V(J)$.
	Then 
	\begin{itemize}
		\item [(1)]
		$\cap_{\alpha \in \Lambda} \fp_\alpha \subseteq \fp$ if and only if
		$\cap_{\alpha \in \Lambda} \fp_\alpha^{\prime_f} \subseteq \fp^{\prime_f}$.
		\item [(2)]
		$\cap_{\alpha \in \Lambda} \fp_\alpha \supseteq \fp$ if and only if
		$\cap_{\alpha \in \Lambda} \fp_\alpha^{\prime_f}\supseteq \fp^{\prime_f}$.
		\item [(3)]
		$\cap_{\alpha \in \Lambda} \fq_\alpha \subseteq \fq$ if and only if
		$\cap_{\alpha \in \Lambda} \overline{\fq_\alpha}^f \subseteq \overline{\fq}^f$.
		\item [(4)]
		If $\cap_{\alpha \in \Lambda} \fq_\alpha \supseteq \fq$,
		then
		$\cap_{\alpha \in \Lambda} \overline{\fq_\alpha}^f \supseteq \overline{\fq}^f$.
		The converse  holds if
		either ``$f$ is surjective" or ``$\Spec(S)\setminus \V(J)$ is compactly packed".
		\item [(5)]
		$\cap_{\alpha \in \Lambda} \overline{\fq_{\alpha}}
		^f \subseteq \fp^{\prime_f}$ if and only if $f^{-1}(\cap_{\alpha \in \Lambda}\fq_{\alpha} + J) \subseteq \fp$.
		\item [(6)]
		%The following always holds:
		$\cap_{\alpha \in \Lambda} \fp^{\prime_f}_{\alpha} \nsubseteq \overline{\fq}^f$.
	%	In particular,
	%	$\fp^{\prime_f} \nsubseteq \overline{\fq}^f$.
		\end{itemize}
\end{lem}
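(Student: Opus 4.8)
The plan is to produce an explicit element that lies in $\cap_{\alpha\in\Lambda}\fp_\alpha^{\prime_f}$ but not in $\overline{\fq}^f$, which mirrors the argument used for part~(5) of Lemma~\ref{cp1} but this time needs no compactly packed hypothesis: since we are comparing against a \emph{single} prime $\fq$ rather than a union, the condition $J\nsubseteq\fq$ comes for free from $\fq\in\Spec(S)\setminus\V(J)$. So first I would fix, once and for all, an element $v\in J\setminus\fq$.

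Next I would observe that the ``vertical'' element $(0,v)=(0,f(0)+v)$ belongs to $\fp_\alpha^{\prime_f}=\fp_\alpha\bowtie^fJ$ for \emph{every} $\alpha\in\Lambda$, because $0\in\fp_\alpha$ and $v\in J$, which is exactly the defining condition of $\fp_\alpha\bowtie^fJ$ (in particular $(0,v)$ really is an element of $\rr$). Hence $(0,v)\in\cap_{\alpha\in\Lambda}\fp_\alpha^{\prime_f}$. On the other hand, an arbitrary element of $\overline{\fq}^f$ has the form $(r,f(r)+j)$ with $r\in R$, $j\in J$ and $f(r)+j\in\fq$; forcing $(0,v)$ into this shape gives $r=0$, $j=v$, and then the membership requirement would read $v=f(0)+v\in\fq$, contradicting the choice of $v$. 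Therefore $(0,v)\notin\overline{\fq}^f$, and we conclude $\cap_{\alpha\in\Lambda}\fp_\alpha^{\prime_f}\nsubseteq\overline{\fq}^f$. (If $\Lambda=\emptyset$ the intersection is all of $\rr$ and the statement is trivial, since $\overline{\fq}^f$ is a proper ideal.)

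I do not anticipate any real obstacle here: the whole point is the structural fact, recorded in Remark~\ref{spec}, that a type~$2$ prime $\overline{\fq}^f$ corresponds to a prime $\fq$ of $S$ with $J\nsubseteq\fq$, so a single nonzero element $v\in J\setminus\fq$ already separates every type~$1$ prime $\fp_\alpha^{\prime_f}$ (all of which contain $\{0\}\times J$) from $\overline{\fq}^f$. The only points requiring a moment's care are the degenerate index set and the routine verification that $(0,v)\in\rr$.
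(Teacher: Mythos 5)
Your argument is correct as far as it goes, but it only proves part (6) of the lemma. For that part your reasoning coincides with the paper's: since $\fq\in\Spec(S)\setminus\V(J)$ one can pick $v\in J\setminus\fq$, and then $(0,v)$ lies in every $\fp_\alpha^{\prime_f}$ (because $0\in\fp_\alpha$ and $v\in J$) but not in $\overline{\fq}^f$ (because $f(0)+v=v\notin\fq$). Your observation that no compactly packed hypothesis is needed here, in contrast with Lemma \ref{cp1}(5), is exactly right, and the aside about the empty index set is harmless.

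The genuine gap is that parts (1)--(5) are not addressed at all, and several of them are not formalities. Parts (1) and (2) do follow quickly from the definition of $\fp^{\prime_f}$, but part (3) has a nontrivial ``if'' direction: given $\cap_{\alpha}\overline{\fq_\alpha}^f\subseteq\overline{\fq}^f$ and $y\in\cap_\alpha\fq_\alpha$, one must again choose $v\in J\setminus\fq$ and test the element $(0,yv)$, which lies in every $\overline{\fq_\alpha}^f$, to conclude $yv\in\fq$ and hence $y\in\fq$. Part (4) is genuinely conditional: its converse requires either surjectivity of $f$ (test elements $(r,f(r))$ with $f(r)\in\cap_\alpha\fq_\alpha$) or compact packedness of $\Spec(S)\setminus\V(J)$ (to find $v\in J\setminus\cup_\alpha\fq_\alpha$ and test $(0,av)$), and your proposal gives no argument for either case. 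Part (5) needs both implications: for ($\Rightarrow$), writing $f(r)=a+i$ with $a\in\cap_\alpha\fq_\alpha$, $i\in J$, the element $(r,f(r)-i)$ lies in $\cap_\alpha\overline{\fq_\alpha}^f\subseteq\fp^{\prime_f}$, forcing $r\in\fp$; for ($\Leftarrow$), any $(r,f(r)+i)\in\cap_\alpha\overline{\fq_\alpha}^f$ has $r\in f^{-1}(\cap_\alpha\fq_\alpha+J)\subseteq\fp$, so it lies in $\fp^{\prime_f}$. Until these parts are supplied, the proposal does not prove the stated lemma.
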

\begin{proof}
	(1) and (2) follow immediately from the
	definition of $\fp^{\prime_f}$ (Remark \ref{spec}).\\
	(3) We prove ``if direction" and the other direction is clear.
    Let
	$y\in \cap_{\alpha \in \Lambda} \fq_\alpha$, and pick
	  $v \in J \setminus \fq$.
	Then $(0,yv)\in \cap_{\alpha \in \Lambda} \overline{\fq_\alpha}^f \subseteq \overline{\fq}^f$, 
	which implies $yv\in \fq$, and so $y\in \fq$.\\
	(4) The proof is similar to the proof of Lemma \ref{cp1}(2),
	and left to the reader.\\
	(5) ($\Rightarrow$): Let $r\in f^{-1}(\cap_{\alpha \in \Lambda}\fq_{\alpha} + J)$, say
	$f(r)= a+i$  for some $a\in \cap_{\alpha \in \Lambda}\fq_{\alpha}$, $i\in J$.
	Hence $(r,f(r)-i)\in \cap_{\alpha \in \Lambda} \overline{\fq_{\alpha}}
	^f \subseteq \fp^{\prime_f}$,
	hence that 
	$r\in \fp$.\\	
	($\Leftarrow$): Let $(r,f(r)+i)\in \cap_{\alpha \in \Lambda} \overline{\fq_{\alpha}}
	^f$,
	which implies $f(r)+i =a\in \cap_{\alpha \in \Lambda}\fq_{\alpha}$.
	Then $r\in f^{-1}(\cap_{\alpha \in \Lambda}\fq_{\alpha} + J) \subseteq \fp$, and so 
	$(r,f(r)+i)\in \fp^{\prime_f}$.
	\\
	(6) For any $v\in J \setminus \fq$,
	$(0,v)\in \cap_{\alpha \in \Lambda} \fp^{\prime_f}_{\alpha} \setminus \overline{\fq}^f$.
\end{proof}

It will be convenient for the reference, if we state now some 
simpler version of the above lemma \cite[Lemma 2.2 and Lemma 2.3]{a}.

\begin{lem}\label{0}
	Let $\fp ,\fp_1, \fp_2\in\Spec(R)$ and
	$\fq, \fq_1 ,\fq_2 \in\Spec(S)\setminus \V(J)$. Then  
	\begin{itemize}
		\item [(1)]
		$\fp_1\subseteq \fp_2$ if and only if $\ \fp_1^{\prime_f}\subseteq \fp_2^{\prime_f}$.
		\item [(2)]
		$\fq_1\subseteq \fq_2$ if and only if
		$\ \overline{\fq_1}^f\subseteq \overline{\fq_2}^f$.
		\item [(3)]
		$ \overline{\fq}^f \subseteq \fp^{\prime_f}$ if and only if $f^{-1}(\fq + J) \subseteq \fp$.
		\item [(4)]
		$\fp^{\prime_f}\nsubseteq \overline{\fq}^f$.
	\end{itemize}
\end{lem}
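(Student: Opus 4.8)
The plan is to obtain Lemma \ref{0} as the special case of Lemmas \ref{cp1} and \ref{pz1} where the index set $\Lambda$ is a singleton, so essentially nothing new has to be proved; the point of stating it separately is notational convenience for later sections. Concretely, for parts (1) and (2) I would take $\Lambda=\{1\}$ in Lemma \ref{cp1}(1) (respectively, in Lemma \ref{pz1}(1)): a one-element union or intersection is just the ideal itself, so $\fp_1^{\prime_f}\subseteq\fp_2^{\prime_f}$ reduces to $\cap_{\alpha}\fp_\alpha^{\prime_f}\subseteq\fp^{\prime_f}$ with the obvious relabeling, and the stated equivalence with $\fp_1\subseteq\fp_2$ is immediate from the definition of $\fp^{\prime_f}$. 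For part (2), the same reduction applies to Lemma \ref{cp1}(2)/(3) (or Lemma \ref{pz1}(3)), but here one must check that the \emph{converse} direction — deducing $\fq_1\subseteq\fq_2$ from $\overline{\fq_1}^f\subseteq\overline{\fq_2}^f$ — does \emph{not} need the extra hypotheses (``$f$ surjective'' or ``compactly packed'') that appear in Lemma \ref{cp1}(2). This is indeed fine: the argument in the proof of Lemma \ref{pz1}(3) already works, since one picks a single $v\in J\setminus\fq_2$ (possible because $J\not\subseteq\fq_2$ as $\fq_2\notin\V(J)$) and, for $y\in\fq_1$, observes $(0,yv)\in\overline{\fq_1}^f\subseteq\overline{\fq_2}^f$ forces $yv\in\fq_2$, hence $y\in\fq_2$; no compact packing of a whole family is needed for a single prime.

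For part (3), I would simply invoke Lemma \ref{cp1}(4) with $\Lambda=\{1\}$ (or Lemma \ref{pz1}(5) with a one-element intersection): the condition $\overline{\fq}^f\subseteq\fp^{\prime_f}$ becomes exactly $f^{-1}(\fq+J)\subseteq\fp$, with both directions of the proof copied verbatim from there — pull an element $r\in f^{-1}(\fq+J)$, write $f(r)=a+i$ with $a\in\fq$, $i\in J$, note $(r,f(r)-i)\in\overline{\fq}^f$ and conclude $r\in\fp$; conversely, any $(r,f(r)+i)\in\overline{\fq}^f$ has $f(r)+i\in\fq$, so $r\in f^{-1}(\fq+J)\subseteq\fp$. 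Part (4) is Lemma \ref{cp1}(5) (equivalently Lemma \ref{pz1}(6)) with a single index: choose $v\in J\setminus\fq$ and observe $(0,v)\in\fp^{\prime_f}\setminus\overline{\fq}^f$, which is itself a one-line specialization.

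Since this is purely a packaging statement, there is no real obstacle; the only thing one should be careful about is (as noted) confirming that the ``converse'' in part (2) is unconditional, unlike its family-indexed analogue, and that every appeal to compact packing in the proofs of Lemmas \ref{cp1} and \ref{pz1} was only used to control \emph{infinite} families and therefore evaporates when $\Lambda$ is a singleton. A clean way to present the proof is therefore a single sentence: \textbf{Proof.} Each of (1)--(4) is the special case $\lvert\Lambda\rvert=1$ of the corresponding item of Lemma \ref{cp1} (or Lemma \ref{pz1}); in part (2) the converse requires no extra hypothesis because the argument of Lemma \ref{pz1}(3) uses only a single element $v\in J\setminus\fq_2$. $\square$
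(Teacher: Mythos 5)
Your proposal is correct, and it matches the paper's intent: the paper gives no proof of Lemma \ref{0} at all, simply presenting it as a ``simpler version'' of Lemmas \ref{cp1} and \ref{pz1} and citing \cite[Lemma 2.2 and Lemma 2.3]{a}, so your singleton-index specialization is exactly the argument being implicitly invoked. Your extra care is also well placed: the surjectivity/compactly-packed hypotheses in Lemma \ref{cp1}(2),(5) are only needed to find $v\in J\setminus\cup_{\alpha}\fq_{\alpha}$ for a whole family, and for a single prime $\fq\notin\V(J)$ such a $v$ exists trivially, so the converse in part (2) and the non-inclusion in part (4) hold unconditionally, just as you verified via the arguments of Lemma \ref{pz1}(3),(6).
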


\section{Transfer of pm condition on amalgamated constructions}

In \cite{do}, the authors studied 
celebrated rings that are called 
\emph{pm-rings}.
The topic is the main subject
of many articles over the years until now.
In this section, we investigate when the
amalgamated algebra $\rr$ is a  pm-ring.
The definition is as follows:

\begin{defn}
	A commutative ring	$R$ is called a \emph{pm-ring}
	if every prime ideal of $R$ is contained in a unique maximal ideal.
\end{defn}

We first give an example that shows the ``if direction'' of
\cite[Theorem 2.4]{pm} is not valid in general. 
Then we correct the mistake of \cite[Theorem 2.4]{pm}.

\begin{exam}\label{ex}
	\textnormal{
		Let $R=k$, $S'=k[x,y]$, 
		$T=S'\setminus \langle x \rangle \cup \langle y \rangle$, and $S=T^{-1}S'$. Let  
		$f:R\to S$ be the composition of the  natural ring
		homomorphisms.
		Let $J =T^{-1} \langle  x\rangle$,
		$\fn =T^{-1} \langle  y\rangle$, and
		$\fq   = 0$.
		Then, the following conditions hold:
		\begin{itemize}
			\item 
			$\Spec (R)= \Max(R) =\{ 0\}$, 
			$\Spec (S) =\{ J, \fn , \fq \}$, and
			$\Max (S) =\{ J, \fn \}$.
			\item 
			%Thus, by Remark \ref{spec}, 
			$\Spec (\rr)=\{
			\overline{\fq}^f =0,
			\overline{\fn}^f,0^{\prime_f}\}$.
			\item 
			$\Max (\rr) =\{
			\overline{\fn}^f,0^{\prime_f}\}$.
		\end{itemize}
		$R$ is  trivially a pm-ring. Also $S$ has only 
		one maximal ideal not containing $J$.	
		Then, $R$, $S$, and $J$ have conditions of 
		\cite[Theorem 2.4]{pm}, but $\rr$
		 is not a  pm-ring, since $\overline{\fq}^f=0$
		is a prime ideal that contained in both
		$\overline{\fn}^f$ and $0^{\prime_f}$.
		}
\end{exam}

\begin{rem}
	Example \ref{ex} also shows that \cite[Corollary 4.3]{pm} is not valid in general. 
	 One can easily check that the assumptions of
	  ``if direction'' of the corollary hold, but
	 $\rr$ is not an \emph{h-local} ring, since 
	 it is not a pm-ring.
\end{rem}

We now characterize when the
amalgamated algebra $\rr$ is a  pm-ring.
This is  one of the main results of the paper.

\begin{thm}\label{pmm}
	$\rr$
	is a pm-ring if and only if $R$
	is a pm-ring and, for any
	$\fq \in \Spec(S)\setminus \V(J)$,
	the following equality holds:
	\begin{equation*}\label{dag}
		\tag*{(\S)} \big\lvert \big(\Max (S) \cap \V(\fq) \big) \setminus \V(J)\big\rvert + 
		\big\lvert \Max (R) \cap \V(f^{-1}(\fq +J)) \big\rvert =1. 
	\end{equation*}
\end{thm}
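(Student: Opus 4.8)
The plan is to use the description of $\Spec(\rr)$ and $\Max(\rr)$ from Remark \ref{spec} to reduce the pm-property of $\rr$ to counting, for each prime of $\rr$, the maximal ideals lying above it, and then to translate these containments back to $R$ and $S$ via Lemma \ref{0}. Since primes (and maximals) of $\rr$ come in exactly two kinds, type 1 and type 2, and since $\fp\mapsto\fp^{\prime_f}$ and $\fq\mapsto\overline{\fq}^f$ are injective and inclusion-preserving in both directions (Lemma \ref{0}(1),(2)), each of these counts is transparent.

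First I treat a type 1 prime $\fp^{\prime_f}$, with $\fp\in\Spec(R)$. By Lemma \ref{0}(4) no type 2 maximal of $\rr$ contains $\fp^{\prime_f}$, and by Lemma \ref{0}(1) a type 1 maximal $\fm^{\prime_f}$ contains $\fp^{\prime_f}$ exactly when $\fm\supseteq\fp$; as $\fm\mapsto\fm^{\prime_f}$ is injective, the maximal ideals of $\rr$ above $\fp^{\prime_f}$ are in bijection with $\Max(R)\cap\V(\fp)$. Hence every type 1 prime of $\rr$ lies under a unique maximal if and only if $R$ is a pm-ring.

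Next I treat a type 2 prime $\overline{\fq}^f$, with $\fq\in\Spec(S)\setminus\V(J)$. Type 1 and type 2 maximals of $\rr$ are distinct (Lemma \ref{0}(4)), so the maximals above $\overline{\fq}^f$ split into two disjoint families. By Lemma \ref{0}(2), the type 2 maximals $\overline{\fn}^f$ above $\overline{\fq}^f$ correspond bijectively to those $\fn\in\Max(S)$ with $\fn\supseteq\fq$ and $\fn\not\supseteq J$, i.e.\ to $\big(\Max(S)\cap\V(\fq)\big)\setminus\V(J)$; by Lemma \ref{0}(3), the type 1 maximals $\fm^{\prime_f}$ above $\overline{\fq}^f$ correspond bijectively to $\Max(R)\cap\V(f^{-1}(\fq+J))$. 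Thus the number of maximal ideals of $\rr$ containing $\overline{\fq}^f$ is exactly the left-hand side of $(\S)$; being a proper ideal, $\overline{\fq}^f$ sits under at least one maximal, so this number is $\ge 1$, and it equals $1$ precisely when $\overline{\fq}^f$ lies under a unique maximal.

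Combining the two cases, $\rr$ is a pm-ring if and only if every type 1 prime and every type 2 prime lies under a unique maximal, if and only if $R$ is a pm-ring and $(\S)$ holds for all $\fq\in\Spec(S)\setminus\V(J)$. I do not expect a genuine obstacle; the one point demanding care is checking that the two families of maximals above a type 2 prime are really disjoint and that each of the correspondences above is a bijection rather than merely a surjection — both following from the injectivity in Lemma \ref{0}(1),(2) together with the non-containment Lemma \ref{0}(4). The degenerate case $\Spec(S)\setminus\V(J)=\emptyset$ (equivalently $J\subseteq\Nil(S)$) is worth noting: there $(\S)$ is vacuous and the statement just reads ``$\rr$ is a pm-ring iff $R$ is''.
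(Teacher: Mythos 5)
Your proposal is correct and follows essentially the same route as the paper: decompose $\Spec(\rr)$ and $\Max(\rr)$ into type 1 and type 2 ideals via Remark \ref{spec}, translate containments through Lemma \ref{0}, and count the maximal ideals above each prime, with the maximals above $\overline{\fq}^f$ splitting into the two disjoint families whose cardinalities are the two terms of $(\S)$. The only cosmetic difference is that you run a single bijective counting argument covering both implications at once, whereas the paper separates the two directions and, in the converse, case-splits on which term of $(\S)$ equals $1$.
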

\begin{proof} 
($\Rightarrow$): Assume that $\rr$
is a pm-ring. Then $R$ is a pm-ring since it is  homomorphic image of $\rr$.
Now let $\fq\in\Spec(S)\setminus \V(J)$.
By Remark \ref{spec} and Lemma \ref{0}, the set of maximal ideals of $\rr$
that contains the prime ideal
$\overline{\fq}^f$, is the union of disjoint sets
$\{\overline{\fn}^f \mid \fn \in \big(\Max (S) \cap \V(\fq) \big) \setminus \V(J) \}$ and
$\{ \fm^{\prime_f}\mid \fm \in \Max (R) \cap \V(f^{-1}(\fq +J) \}$.
Therefore, the assumption implies the desired conclusion:
$$
\big\lvert \big(\Max (S) \cap \V(\fq) \big) \setminus \V(J)\big\rvert + 
\big\lvert \Max (R) \cap \V(f^{-1}(\fq +J)) \big\rvert =
\big\lvert \Max (\rr) \cap \V(\overline{\fq}^f) \big\rvert =1.$$

($\Leftarrow$):
Assume that $R$
is a pm-ring and, for any
$\fq \in \Spec(S)\setminus \V(J)$, the equality
\ref{dag} holds.
We show that, for any $\mathcal{P} \in\Spec(\rr)$,
$\lvert \Max (\rr) \cap \V (\mathcal{P}) \rvert =1$.
Two cases arise:\\
\textbf{Case 1.}
$\mathcal{P}=\fp^{\prime_f}$ with
$\fp \in \Spec (R)$.
As $R$ is a pm-ring, there exists a unique 
maximal ideal $\fm$ of $R$ that contains $\fp$.
On the other hand, by Lemma \ref{0}, 
$\fp^{\prime_f}$ is not contained
in any type 2 maximal ideal of $\rr$. Hence
$\fm^{\prime_f}$ is the unique 
maximal ideal of $\rr$ that contains $\fp^{\prime_f}$.\\
\textbf{Case 2.}
$\mathcal{P}=\overline{\fq}^f$ with
$\fq \in \Spec(S)\setminus \V(J)$.
In the left side of the equality \ref{dag},
one term is equal to $1$ and the other term is equal to $0$.
Suppose first that 
the second term of \ref{dag} is $0$, and
the first term  is $1$, with $\fn$ as the only element of $\big(\Max (S) \cap \V(\fq) \big) \setminus \V(J)$. 
Then, by Lemma \ref{0}(3), 
there is no type 1
maximal ideal of $\rr$ that contains $\overline{\fq}^f$;
while, by Lemma \ref{0}(2),
 $\overline{\fn}^f$ is the unique type 2
 maximal ideal of $\rr$ that contains $\overline{\fq}^f$.
 Thus $\lvert \Max (\rr) \cap \V (\overline{\fq}^f) \rvert =1$.\\
Next suppose that
the first term of \ref{dag} is $0$, and the second is $1$, with $\fm$ as the only element of $\Max (R) \cap \V(f^{-1}(\fq +J)$.
Then, by Lemma \ref{0}(2), 
there is no type 2
maximal ideal of $\rr$ that contains $\overline{\fq}^f$;
while, by Lemma \ref{0}(3), $\fm^{\prime_f}$ is the unique
maximal ideal of $\rr$ that contains $\overline{\fq}^f$.
Again we showed that 
$\lvert \Max (\rr) \cap \V (\overline{\fq}^f) \rvert =1$.
This completes the proof.
\end{proof}

We now consider some special cases of amalgamations 
to be pm-rings.
Recall that if $f:=id_R$ is the identity homomorphism
on $R$, and $I$ is an ideal of $R$, 
then $R\bowtie I:=R\bowtie^{id_R} I$ is called the amalgamated duplication of $R$ along $I$.

\begin{cor}\label{pmdup}
$R\bowtie I$
is a pm-ring if and only if so is $R$.
\end{cor}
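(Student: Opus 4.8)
The plan is to derive Corollary~\ref{pmdup} directly from Theorem~\ref{pmm} by specializing to $f=\mathrm{id}_R$, $S=R$, and $J=I$. In this case $f$ is surjective, $f^{-1}(\fq+I)=\fq+I$ for every $\fq$, and the two rings appearing in condition \ref{dag} are the same ring $R$. So the whole content is to show that condition \ref{dag} becomes automatic whenever $R$ is a pm-ring, and conversely that $R\bowtie I$ being a pm-ring forces $R$ to be a pm-ring.

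The reverse implication is the easy half: if $R\bowtie I$ is a pm-ring, then $R$, being a homomorphic image of $R\bowtie I$ (via the projection onto the first coordinate, whose kernel is $\{0\}\times I$), is a pm-ring as well — exactly the argument already used in the forward direction of Theorem~\ref{pmm}. For the forward implication, suppose $R$ is a pm-ring. By Theorem~\ref{pmm} it suffices to check \ref{dag} for each $\fq\in\Spec(R)\setminus\V(I)$. Here $f^{-1}(\fq+I)=\fq+I$, and $\fq\subseteq\fq+I$, so $\V(\fq+I)\subseteq\V(\fq)$. Now pick any maximal ideal $\fm\in\Max(R)\cap\V(\fq)$, which exists and is unique by the pm-hypothesis applied to $\fq$. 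Since $\fq+I$ is an ideal containing $\fq$, every maximal ideal containing $\fq+I$ also contains $\fq$, hence equals $\fm$; thus $\Max(R)\cap\V(\fq+I)$ is either $\{\fm\}$ or empty, and it is $\{\fm\}$ precisely when $\fm\supseteq I$. Similarly $\bigl(\Max(R)\cap\V(\fq)\bigr)\setminus\V(I)$ is $\{\fm\}$ if $\fm\not\supseteq I$ and empty if $\fm\supseteq I$. The two sets are therefore complementary: exactly one of them equals $\{\fm\}$ and the other is empty, so the left-hand side of \ref{dag} equals $1$. This establishes \ref{dag} for all $\fq$, and Theorem~\ref{pmm} gives that $R\bowtie I$ is a pm-ring.

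The step that needs the most care — though it is hardly an obstacle — is the bookkeeping in the previous paragraph: one must observe that the \emph{same} maximal ideal $\fm$ governs both terms of \ref{dag} (which works because $f^{-1}(\fq+I)$ and $\fq$ differ only by the ideal $I$, unlike in the general amalgamation where $R$ and $S$ are unrelated), and that the dichotomy ``$\fm\supseteq I$ or not'' splits the contribution cleanly between the two terms so that the sum is always exactly $1$. This is the precise point where the amalgamated duplication is genuinely simpler than a general amalgamation: a pm-ring $R$ automatically satisfies \ref{dag}, whereas in general \ref{dag} is an extra constraint that can fail (as Example~\ref{ex} shows). No separate verification that $I$ is a non-zero proper ideal is needed beyond the standing hypotheses, so the argument is complete once these observations are assembled.
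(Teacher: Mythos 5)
Your proof is correct and follows essentially the same route as the paper: specialize Theorem~\ref{pmm} to $f=\mathrm{id}_R$, $J=I$, and observe that the two terms of \ref{dag} together count exactly the maximal ideals of $R$ containing $\fq$ (split by whether they contain $I$), so the condition is automatic when $R$ is a pm-ring. Your explicit dichotomy ``$\fm\supseteq I$ or not'' is just a more detailed spelling-out of the paper's counting remark.
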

\begin{proof}
	By Theorem \ref{pmm}, $R\bowtie I$
	is a pm-ring if and only if $R$
	is a pm-ring and, for any
	$\fq \in \Spec(R)\setminus \V(I)$,
	the following equality holds:
	\begin{equation*}
		\big\lvert \big(\Max (R) \cap \V(\fq) \big) \setminus \V(I)\big\rvert + 
		\big\lvert \Max (R) \cap \V(\fq +I) \big\rvert =1. 
	\end{equation*}	
	Note that the left side of the above equality,  counts the number
	of ``maximal ideals that  contain $\fq$ but not $I$" plus
	``maximal ideals that contain both $\fq$ and $I$";
	i.e., the number of all maximal ideals that contain $\fq$.
	Therefore when we assume that $R$
	is a pm-ring, the equality above has not 
	further hypothesis.
\end{proof}

Note that non-local domains, trivially are never
pm-rings. Note also that $R\bowtie I$ is never
a domain. Corollary \ref{pmdup} shows a way of
constructing rings that are not pm-rings, from 
trivial ones (non-local domains).

The next corollary deals with \emph{trivial extensions}.
Let $M$ be an $R$-module. 
Then $R\ltimes M$, the trivial
	extension of $R$ by $M$, is a special case of $\rr$
with $J^2=0$ (see \cite[Remark 2.8]{DFF}). 

\begin{cor}\label{pmi}
	 The 	following statements hold:
	\begin{itemize}
		\item [(1)]
		Let $J\subseteq \Jac (S)$. Then
		$R\bowtie^f J$ is a pm-ring if and only if so is  $R$.
		\item [(2)]
		Let $M$ be an $R$-module. Then
		$R\ltimes M$ is a pm-ring if and only if so is  $R$.
	\end{itemize}
\end{cor}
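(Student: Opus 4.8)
The plan is to obtain both parts from Theorem \ref{pmm}. In both statements the implication ``$\rr$ a pm-ring $\Rightarrow$ $R$ a pm-ring'' is free, since $R$ is a homomorphic image of $\rr$ (the projection $(r,f(r)+j)\mapsto r$ is a surjective ring homomorphism onto $R$), exactly as in the first line of the proof of Theorem \ref{pmm}. So the work is all in the converse, and there the strategy is: assume $R$ is a pm-ring, fix an arbitrary $\fq\in\Spec(S)\setminus\V(J)$, verify the numerical identity \ref{dag}, and then quote Theorem \ref{pmm}.

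For part (1), the hypothesis $J\subseteq\Jac(S)$ immediately trivializes the first summand of \ref{dag}: every maximal ideal of $S$ contains $J$, so $\Max(S)\subseteq\V(J)$ and hence $\big(\Max(S)\cap\V(\fq)\big)\setminus\V(J)=\emptyset$. Thus \ref{dag} reduces to the assertion that $f^{-1}(\fq+J)$ lies in exactly one maximal ideal of $R$. I would check ``at least one'' by noting that $f^{-1}(\fq+J)$ is proper: choosing a maximal ideal $\fn$ of $S$ with $\fq\subseteq\fn$ we also get $J\subseteq\Jac(S)\subseteq\fn$, so $\fq+J\subseteq\fn\subsetneq S$, whence $1=f(1)\notin\fq+J$. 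For ``at most one'' I would use that $f^{-1}(\fq)$ is a \emph{prime} ideal of $R$ contained in $f^{-1}(\fq+J)$: since $R$ is a pm-ring, $f^{-1}(\fq)$ sits below a unique maximal ideal $\fm$ of $R$, and any maximal ideal containing $f^{-1}(\fq+J)$ contains $f^{-1}(\fq)$, hence must be $\fm$. This gives \ref{dag}, and Theorem \ref{pmm} finishes the converse.

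For part (2), I would realize $R\ltimes M$ as an amalgamation: by \cite[Remark 2.8]{DFF} one has $R\ltimes M=R\bowtie^fJ$ with $S=R\ltimes M$, $f$ the canonical homomorphism, and $J=0\ltimes M$ satisfying $J^2=0$ (if $M=0$ there is nothing to prove, since then $R\ltimes M=R$). Then $J\subseteq\Nil(S)\subseteq\Jac(S)$, so part (1) applies directly. Alternatively one could observe that $J\subseteq\Nil(S)$ forces $\Spec(S)\setminus\V(J)=\emptyset$ via Remark \ref{spec}, so that condition \ref{dag} is vacuous and the equivalence drops straight out of Theorem \ref{pmm}.

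The only step that is not pure bookkeeping is the uniqueness argument in part (1): since $f^{-1}(\fq+J)$ need not be a prime ideal, one cannot invoke the pm-property of $R$ for it directly, and the fix is precisely to squeeze it between the prime $f^{-1}(\fq)$ and the unique maximal ideal lying over that prime. I expect this to be the main --- and really the only --- obstacle; the remaining manipulations are routine consequences of Remark \ref{spec} and Lemma \ref{0}.
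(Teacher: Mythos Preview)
Your proposal is correct and follows essentially the same approach as the paper: deduce both parts from Theorem \ref{pmm} by checking that the first summand in \ref{dag} vanishes when $J\subseteq\Jac(S)$ and that the second summand equals~$1$ via the inclusion $f^{-1}(\fq)\subseteq f^{-1}(\fq+J)$, then reduce part (2) to part (1). In fact you are slightly more careful than the paper, which states only the uniqueness half of the second summand being~$1$ and leaves the properness of $f^{-1}(\fq+J)$ implicit; your explicit check that $\fq+J\subseteq\fn\subsetneq S$ for some maximal $\fn$ fills this in.
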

\begin{proof}
	 (2) is
	a special case of (1). In order to prove (1),
let $J\subseteq \Jac (S)$ and $R$ be a pm-ring.	
Let $\fq \in \Spec(S)\setminus \V(J)$.
We show that the equality \ref{dag}
in the proof of
the Theorem \ref{pmm} holds.
 The first term of the equality is $0$, 
 since $J\subseteq \Jac (S)$.
The second term is  $1$
since every maximal ideal of $R$ that contains
$ f^{-1}(\fq +J)$, also contains
 the prime ideal $ f^{-1}(\fq )$ of $R$.
\end{proof}

In the proof of Theorem \ref{pmm}, we noticed that
$\rr$ is a pm-ring if and only if
one of the first and second term of \ref{dag} is equal to
$0$ and the other is equal to $1$.  
Example \ref{ex} draws conditions in which
both terms of \ref{dag} are equal to $1$, and $\rr$ is not a pm-ring.
In the  Corollary \ref{pmi}, the first term is $0$ and 
the second term is $1$.
We now provide a situation that the second term is $0$.
Here we will use the trivial
 fact that zero-dimensional rings are pm.
 
\begin{exam}
\textnormal{	Let $R$ be a zero-dimensional ring which is not local.
	Let $f:R\to R$ be the identity map and
	 $I$ be a maximal ideal of $R$. 
	 For any $\fq \in \Spec(R)\setminus \V(I)$,
	 $\fq +I=R$. Thus, here, the second term of \ref{dag} is equal to $0$.
	 On the other hand, $\fq$ itself is the unique maximal ideal
	 that contains $\fq$, and so, 
	 the first term of \ref{dag} is equal to $1$.
	 Note that, by Corollary \ref{pmdup}, $R\bowtie  I$ is a  pm-ring.}
\end{exam}

%%%%%%%%%%%%%%%%%%%%%%%%%%%%%%%%%%%%%%%%%%%%%%%%%%%%%%%

\section{Compactly packed and properly zipped property}

In this section
we investigate
the transfer of compactly packed and properly zipped property on the amalgamations.
For more information and useful references about 
these properties, we
refer the reader to \cite{t}.
Let us first  introduce the concept of compactly packed condition
for an arbitrary subset of $\Spec (R)$.

\begin{defn}\label{def}
	Let $X \subseteq \Spec (R)$. We say that $X$ is \emph{compactly packed} if
	whenever an ideal $I$ of $R$ is contained in the union of a family $\{\fp_i \}_i$ of
	elements of $X$, then $I\subseteq \fp_i$, for some $i$. We simply say that $R$ is compactly packed if the whole $\Spec (R)$ is compactly packed.
\end{defn}

Note  that, in the definition,
we can assume that $I$ is prime (use \cite[Theorem 1]{kap}).
We shall use this fact frequently without any comment.

\begin{thm}\label{cp}
	If $\rr$ is  compactly packed, then 
	$R$ and $\Spec(S)\setminus \V(J)$ are compactly packed.
\end{thm}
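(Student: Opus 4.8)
The plan is to prove the two assertions separately, each time pulling back a witnessing configuration from $\rr$ to $R$ or to $S$.

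\medskip

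\noindent\textbf{$R$ is compactly packed.}
Suppose an ideal $I$ of $R$ satisfies $I\subseteq\cup_{\alpha\in\Lambda}\fp_\alpha$ with $\fp_\alpha\in\Spec(R)$; by the remark after Definition \ref{def} we may assume $I=\fp$ is prime. Form the type 1 prime $\fp^{\prime_f}$ and the type 1 primes $\fp_\alpha^{\prime_f}$. By Lemma \ref{cp1}(1), $\fp\subseteq\cup_\alpha\fp_\alpha$ forces $\fp^{\prime_f}\subseteq\cup_\alpha\fp_\alpha^{\prime_f}$. Since $\rr$ is compactly packed, $\fp^{\prime_f}\subseteq\fp_\beta^{\prime_f}$ for some $\beta$, and then Lemma \ref{cp1}(1) (the ``only if'' direction, or simply Lemma \ref{0}(1)) gives $\fp\subseteq\fp_\beta$. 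Hence $R$ is compactly packed.

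\medskip

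\noindent\textbf{$\Spec(S)\setminus\V(J)$ is compactly packed.}
Let $\fq\in\Spec(S)\setminus\V(J)$ and $\{\fq_\alpha\}_{\alpha\in\Lambda}\subseteq\Spec(S)\setminus\V(J)$ with $\fq\subseteq\cup_\alpha\fq_\alpha$ (again reducing to the prime case is legitimate, but one must check the prime we reduce to still lies outside $\V(J)$ --- this is where a little care is needed, see below). By Lemma \ref{cp1}(2), $\overline{\fq}^f\subseteq\cup_\alpha\overline{\fq_\alpha}^f$, so by compact packedness of $\rr$ there is $\beta$ with $\overline{\fq}^f\subseteq\overline{\fq_\beta}^f$, and Lemma \ref{0}(2) yields $\fq\subseteq\fq_\beta$, as wanted.

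\medskip

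\noindent\textbf{The main obstacle.}
The delicate point is the reduction ``we may assume the contained ideal is prime'' for the subset $\Spec(S)\setminus\V(J)$: Kaplansky's theorem \cite[Theorem 1]{kap} produces a prime contained in the union, but a priori that prime could lie in $\V(J)$, in which case it is not an element of our index family and Lemma \ref{cp1}(2) does not directly apply. I would handle this either by arguing that if $I\subseteq\cup_\alpha\fq_\alpha$ with each $\fq_\alpha\not\supseteq J$ then one can replace $I$ by $I\cap J$ (still contained in the union, and now forcing any prime minimal over it that sits inside the union to avoid... ) --- more cleanly, by choosing an element $v\in J\setminus\cup_\alpha\fq_\alpha$ (available exactly because each $\fq_\alpha\not\supseteq J$ and $\Spec(S)\setminus\V(J)$ consists of primes not containing $J$, combined with prime avoidance in the compactly packed ring $\rr$ applied to the ideal $(0):J$ or directly to $J\bowtie^f J$), and then running the argument of Lemma \ref{cp1}(2) using the elements $(0,av)$. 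Concretely: for the ideal $I\subseteq S$ with $I\subseteq\cup_\alpha\fq_\alpha$, consider the ideal $0\times vI\subseteq\rr$; it lies inside $\cup_\alpha\overline{\fq_\alpha}^f$, hence inside some $\overline{\fq_\beta}^f$ by compact packedness of $\rr$, which gives $vI\subseteq\fq_\beta$ and therefore $I\subseteq\fq_\beta$. This sidesteps the primeness reduction entirely. The rest is routine bookkeeping with Remark \ref{spec} and Lemmas \ref{cp1} and \ref{0}.
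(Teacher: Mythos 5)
Your proposal is essentially correct, and it takes a genuinely different route from the paper. For the first assertion the paper simply observes that $R$ is a homomorphic image of $\rr$ (namely $\rr/(\{0\}\times J)$), while you argue directly through type 1 primes via Lemma \ref{cp1}(1) and Lemma \ref{0}(1); both work. For the second assertion the paper keeps the Kaplansky reduction: it takes a prime $\fq\in\Spec(S)$, possibly lying in $\V(J)$, with $\fq\subseteq\cup_{\alpha}\fq_{\alpha}$, and rules out $J\subseteq\fq$ by noting that in that case $\fp^{\prime_f}\subseteq\cup_{\alpha}\overline{\fq_{\alpha}}^f$ for $\fp=f^{-1}(\fq)$, so compact packedness of $\rr$ would force $\fp^{\prime_f}\subseteq\overline{\fq_{\beta}}^f$ for a single $\beta$, contradicting Lemma \ref{0}(4); after that it proceeds as in your second paragraph. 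You instead bypass the primeness reduction altogether and treat an arbitrary ideal $I$ of $S$ via the ideal $\{0\}\times vI$ of $\rr$; that part of your argument is sound (note $vI=\{vx\mid x\in I\}$, so every element of $\{0\}\times vI$ really does land in some $\overline{\fq_{\alpha}}^f$), and it is arguably cleaner in that it never leaves the class of ideals you control.

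The one step you must still nail down is the existence of $v\in J\setminus\cup_{\alpha}\fq_{\alpha}$. For an infinite family this does not follow from $J\nsubseteq\fq_{\alpha}$ for each $\alpha$, and your parenthetical justification is not yet a proof: the ideal $(0):J$ is irrelevant here, and ``$J\bowtie^f J$'' is not defined notation. The intended move does work once phrased correctly: apply compact packedness of $\rr$ to the ideal $0\bowtie^f J=\{0\}\times J$. If $J\subseteq\cup_{\alpha}\fq_{\alpha}$, then every $(0,j)$ with $j\in J$ lies in some $\overline{\fq_{\alpha}}^f$, so $\{0\}\times J\subseteq\cup_{\alpha}\overline{\fq_{\alpha}}^f$, hence $\{0\}\times J\subseteq\overline{\fq_{\beta}}^f$ for some $\beta$, which gives $J\subseteq\fq_{\beta}$ and contradicts $\fq_{\beta}\notin\V(J)$. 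This is exactly the point where the compact packedness of $\rr$ is needed a second time (the paper needs it at the analogous spot, in its proof of the claim $J\nsubseteq\fq$). With that repair your argument is complete.
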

\begin{proof}
	 Assume that $\rr$ is  compactly packed.
	Then $R$ is compactly packed since it is  homomorphic image of $\rr$.
	To prove $\Spec(S)\setminus \V(J)$ is compactly packed,
	let $\{\fq_\alpha\}_{\alpha \in \Lambda} \in\Spec(S)\setminus \V(J)$, and  $\fq\in \Spec(S)$ such that
	$\fq\subseteq \cup_{\alpha \in \Lambda}\fq_{\alpha}$.
	We need to prove that, for some $\alpha \in \Lambda$,
	$\fq\subseteq \fq_{\alpha}$.
	
	We claim that $J\nsubseteq \fq$.
	Once the claim proved, we shall have
	$\overline{\fq}^f \in \Spec (\rr)$, 
	and by Lemma \ref{cp1}(2), $\overline{\fq}^f\subseteq \cup_{\alpha \in \Lambda}\overline{\fq_{\alpha}}^f$.
	Hence, by assumption, 
	$\overline{\fq}^f \subseteq \overline{\fq_{\alpha}}^f$,
	for some $\alpha \in \Lambda$.
	Thus
	$\fq\subseteq \fq_{\alpha}$, by Lemma \ref{0}.\\
	It remains to prove the claim.	
	Suppose on the contrary that 
	$J\subseteq \fq$, and set
	$\fp:=f^{-1}(\fq) \in \Spec (R)$.
	Then,  for any
	$(r, f(r)+i)\in \fp^{\prime_f}$, 
	one has 
	$f(r)+i \in \fq \subseteq \cup_{\alpha \in \Lambda}\fq_{\alpha}$
	which implies
	$(r, f(r)+i)\in \cup_{\alpha \in \Lambda}\overline{\fq_{\alpha}}^f$. 
	Therefore,
	$\fp^{\prime_f}\subseteq \cup_{\alpha \in \Lambda}\overline{\fq_{\alpha}}^f$.
	The assumption now leads to
	$\fp^{\prime_f}\subseteq \overline{\fq_{\alpha}}^f$,
	for some $\alpha \in \Lambda$.
	This contradicts Lemma \ref{0}.
\end{proof}

It is not easy to prove the converse of
Theorem \ref{cp}.
%, i.e., to prove $\Spec (\rr)$ is compactly packed.	
But we take some steps in this direction.

\begin{prop}
	Let $R$ be compactly packed. Then
	$\{\fp^{\prime_f}_{\alpha}\}_{\alpha \in \Lambda}$
	is compactly packed, for any family
	$\{\fp_{\alpha}\}_{\alpha \in \Lambda}\in \Spec(R)$.
\end{prop}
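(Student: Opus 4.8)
The plan is to reduce a covering of a prime ideal $\mathcal{P}$ of $\rr$ by the family $\{\fp^{\prime_f}_\alpha\}_{\alpha\in\Lambda}$ to a covering statement inside $R$, where the compactly packed hypothesis can be applied. By the remark following Definition \ref{def}, it suffices to test the condition on prime ideals, so I would take $\mathcal{P}\in\Spec(\rr)$ with $\mathcal{P}\subseteq\cup_{\alpha\in\Lambda}\fp^{\prime_f}_\alpha$ and aim to show $\mathcal{P}\subseteq\fp^{\prime_f}_\alpha$ for some $\alpha$. By Remark \ref{spec}, $\mathcal{P}$ is either of type $1$ or type $2$, so the argument splits into two cases.

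In the first case $\mathcal{P}=\fp^{\prime_f}$ for some $\fp\in\Spec(R)$. Then Lemma \ref{cp1}(1) translates the inclusion $\fp^{\prime_f}\subseteq\cup_{\alpha\in\Lambda}\fp^{\prime_f}_\alpha$ into $\fp\subseteq\cup_{\alpha\in\Lambda}\fp_\alpha$; since $R$ is compactly packed, $\fp\subseteq\fp_\alpha$ for some $\alpha$, and applying Lemma \ref{cp1}(1) (or Lemma \ref{0}(1)) in the other direction gives $\fp^{\prime_f}\subseteq\fp^{\prime_f}_\alpha$, as required. In the second case $\mathcal{P}=\overline{\fq}^f$ for some $\fq\in\Spec(S)\setminus\V(J)$, and the inclusion $\overline{\fq}^f\subseteq\cup_{\alpha\in\Lambda}\fp^{\prime_f}_\alpha$ is, by Lemma \ref{cp1}(4), equivalent to $f^{-1}(\fq+J)\subseteq\cup_{\alpha\in\Lambda}\fp_\alpha$. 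Again compact packedness of $R$ yields $f^{-1}(\fq+J)\subseteq\fp_\alpha$ for some $\alpha$, and Lemma \ref{0}(3) converts this back into $\overline{\fq}^f\subseteq\fp^{\prime_f}_\alpha$. This closes both cases and proves the proposition.

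The only subtlety worth checking is that $f^{-1}(\fq+J)$ is genuinely an ideal of $R$ (it is the preimage of an ideal under a ring homomorphism) so that compact packedness of $R$ applies to it directly — there is no need to first pass to a prime ideal, though one could. I do not anticipate a real obstacle here: the content of the proposition is essentially that the two ``dictionary'' lemmas from Section 2 (Lemma \ref{cp1}(1), (4) and Lemma \ref{0}(1), (3)) already encode everything, so the proof is a short case analysis with no additional hypotheses needed beyond compact packedness of $R$. If anything, the mild care required is to make sure that, in the type $2$ case, one records both directions of Lemma \ref{cp1}(4): the forward direction to extract the $R$-level covering and then Lemma \ref{0}(3) — which is exactly the single-ideal form — to return to $\rr$.
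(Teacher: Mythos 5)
Your proof is correct and follows essentially the same route as the paper: reduce to a prime ideal $\mathcal{P}$ of $\rr$ via the remark after Definition \ref{def}, split into type 1 and type 2 cases, and use Lemma \ref{cp1}(1),(4) together with Lemma \ref{0}(1),(3) to pass to a covering in $R$ and back. The paper merely writes out the type 2 case and declares the type 1 case similar, whereas you spell out both; your observation that compact packedness applies directly to the ideal $f^{-1}(\fq+J)$ is also consistent with how the paper uses it.
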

\begin{proof}
Let $\mathcal{P}, \{\fp^{\prime_f}_{\alpha}\}_{\alpha \in \Lambda} \in\Spec(\rr)$ and 
$\mathcal{P} \subseteq \cup_{\alpha \in \Lambda}\fp^{\prime_f}_{\alpha}$.
We show that, for some $\alpha \in \Lambda$,
$\mathcal{P}\subseteq \fp^{\prime_f}_{\alpha}$.
The argument  splits into two cases. Suppose first that
$\mathcal{P}=\overline{\fq}^f$ with
$\fq \in \Spec(S)\setminus \V(J)$.
By Lemma \ref{cp1}(4), we have $f^{-1}(\fq + J) \subseteq \cup_{\alpha \in \Lambda}\fp_{\alpha}$.
The assumption now gives 
$f^{-1}(\fq + J) \subseteq \fp_{\alpha}$, for some
$\alpha \in \Lambda$.	
Another use of Lemma \ref{cp1}
will yield the desired conclusion that
$\overline{\fq}^f \subseteq \fp^{\prime_f}_{\alpha}$,
as desired.
Argument on the case $\mathcal{P}=\fp^{\prime_f}$ with
$\fp \in \Spec (R)$ is similar, and omitted.
\end{proof}

Recall that, if $J\subseteq \Nil (S)$, then $\rr$ does not have any 
type 2 prime ideals.
Thus we have the following result.

\begin{cor} \label{cmf}
	The 	following statements hold:
	\begin{itemize}
		\item [(1)]
		Let $J\subseteq \Nil (S)$. Then
		$R\bowtie^f J$ is compactly packed if and only if so is  $R$.
		\item [(2)]
		Let $M$ be an $R$-module. Then
		$R\ltimes M$ is compactly packed if and only if so is  $R$.
	\end{itemize}
\end{cor}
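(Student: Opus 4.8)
The plan is to derive Corollary \ref{cmf} from the Proposition together with Theorem \ref{cp}, using the structural observation recorded just before the corollary. Part (2) is the special case $S = R \ltimes M$ as an $R$-algebra (with $J = M$ and $M^2 = 0 \subseteq \Nil(S)$), so it suffices to prove (1). For (1), assume $J \subseteq \Nil(S)$. The forward implication is immediate from Theorem \ref{cp}: if $\rr$ is compactly packed, then so is $R$ (it is a homomorphic image of $\rr$). For the converse, suppose $R$ is compactly packed, and we must show $\rr$ is compactly packed.

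The key point is that when $J \subseteq \Nil(S)$, Remark \ref{spec} tells us $\rr$ has no type 2 prime ideals, so $\Spec(\rr) = \{\fp^{\prime_f} \mid \fp \in \Spec(R)\}$. Hence if an ideal (which we may take prime, by the remark after Definition \ref{def}) $\mathcal{P} \in \Spec(\rr)$ is contained in a union $\cup_{\alpha \in \Lambda} \mathcal{P}_\alpha$ of prime ideals of $\rr$, then \emph{every} $\mathcal{P}_\alpha$ is of type 1, i.e. $\mathcal{P}_\alpha = \fp^{\prime_f}_\alpha$ for suitable $\fp_\alpha \in \Spec(R)$. Thus $\{\mathcal{P}_\alpha\}_{\alpha \in \Lambda} = \{\fp^{\prime_f}_\alpha\}_{\alpha \in \Lambda}$ with $\{\fp_\alpha\}_{\alpha \in \Lambda} \in \Spec(R)$, and since $R$ is compactly packed, the Proposition applies: the family $\{\fp^{\prime_f}_\alpha\}_{\alpha \in \Lambda}$ is compactly packed. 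Therefore $\mathcal{P} \subseteq \mathcal{P}_\alpha$ for some $\alpha$, which is exactly what is needed. Since this holds for every such $\mathcal{P}$ and every such family, $\Spec(\rr)$ is compactly packed, i.e. $\rr$ is compactly packed.

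There is no real obstacle here — the corollary is a clean combination of the Proposition (which handles families of type 1 primes) with the elementary fact that $J \subseteq \Nil(S)$ forces every prime of $\rr$ to be of type 1, so a covering family of primes automatically consists of type 1 primes and the Proposition covers all cases. The only thing to be careful about is to invoke the reduction "we may assume $I$ is prime" from the remark following Definition \ref{def}, so that the covered ideal $\mathcal{P}$ and the covering ideals $\mathcal{P}_\alpha$ may all be taken prime, letting the structural description of $\Spec(\rr)$ do its work; and to note explicitly that (2) is subsumed by (1) because $R \ltimes M = R \bowtie^f J$ with $J^2 = 0$, whence $J \subseteq \Nil(R \ltimes M)$.
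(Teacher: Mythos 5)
Your proposal is correct and follows exactly the route the paper intends: the forward direction via Theorem \ref{cp}, and the converse by noting that $J\subseteq\Nil(S)$ removes all type 2 primes of $\rr$ (Remark \ref{spec}), so every covering family consists of type 1 primes and the preceding Proposition applies, with (2) subsumed as the case $J^2=0$. No gaps; this matches the paper's (implicit) argument.
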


\begin{prop}
    Let $\Spec(S)\setminus \V(J)$ be compactly packed. Then
	$\{\overline{\fq}^f_{\delta}\}_{\delta \in \Delta}$
	is compactly packed, for any family
	$\{\fq_{\delta}\}_{\delta \in \Delta}\in \Spec(S)\setminus \V(J)$.
\end{prop}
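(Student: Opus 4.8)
The plan is to mimic the structure of the preceding proposition, but now the family consists of type~2 primes, so we must handle the two possible shapes of the ``absorbed'' prime $\mathcal P$ separately. Let $\{\fq_\delta\}_{\delta\in\Delta}\subseteq\Spec(S)\setminus\V(J)$ and suppose $\mathcal P\in\Spec(\rr)$ with $\mathcal P\subseteq\bigcup_{\delta\in\Delta}\overline{\fq_\delta}^f$; we want $\mathcal P\subseteq\overline{\fq_\delta}^f$ for some $\delta$. By Remark~\ref{spec}, $\mathcal P$ is either $\fp^{\prime_f}$ for some $\fp\in\Spec(R)$, or $\overline{\fq}^f$ for some $\fq\in\Spec(S)\setminus\V(J)$.

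First I would dispose of the type~1 case: if $\mathcal P=\fp^{\prime_f}$, then Lemma~\ref{cp1}(5) (which applies precisely because $\Spec(S)\setminus\V(J)$ is assumed compactly packed) says $\fp^{\prime_f}\nsubseteq\bigcup_{\delta\in\Delta}\overline{\fq_\delta}^f$, contradicting the hypothesis. So this case is vacuous, and in particular no type~1 prime is ever absorbed into a union of type~2 primes. That leaves the type~2 case $\mathcal P=\overline{\fq}^f$. Here I would invoke Lemma~\ref{cp1}(2): from $\overline{\fq}^f\subseteq\bigcup_{\delta\in\Delta}\overline{\fq_\delta}^f$ and the compact packedness of $\Spec(S)\setminus\V(J)$, we get $\fq\subseteq\bigcup_{\delta\in\Delta}\fq_\delta$. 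Applying the compactly packed hypothesis on $\Spec(S)\setminus\V(J)$ once more (to the prime ideal $\fq$ of $S$ contained in this union of members of $\Spec(S)\setminus\V(J)$) yields $\fq\subseteq\fq_\delta$ for some $\delta\in\Delta$. Finally, Lemma~\ref{0}(2) upgrades this to $\overline{\fq}^f\subseteq\overline{\fq_\delta}^f$, as desired.

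One subtlety worth a sentence in the writeup: in Definition~\ref{def} the ambient ideal $I$ may be taken prime, so in the second case it is legitimate to treat $\mathcal P=\overline{\fq}^f$ as coming from a genuine prime $\fq$ of $S$ lying outside $\V(J)$, and then $\fq$ itself plays the role of the prime ideal of $S$ to which we apply the compact packedness of $\Spec(S)\setminus\V(J)$. The only place any care is needed is making sure the hypothesis of Lemma~\ref{cp1}(2) is met — but ``$\Spec(S)\setminus\V(J)$ is compactly packed'' is exactly one of the two sufficient conditions listed there, so the converse direction of (2) is available to us. I expect no genuine obstacle here: the main work is simply organizing the case split and citing Lemma~\ref{cp1}(2), Lemma~\ref{cp1}(5), and Lemma~\ref{0}(2) in the right order; the ``hard part'' is really just recognizing that the type~1 case is ruled out outright by Lemma~\ref{cp1}(5).
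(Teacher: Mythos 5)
Your proof is correct and follows essentially the same route as the paper: Lemma \ref{cp1}(5) rules out the type~1 case, Lemma \ref{cp1}(2) (whose converse is available since $\Spec(S)\setminus\V(J)$ is compactly packed) reduces the type~2 case to $\fq\subseteq\bigcup_\delta\fq_\delta$, compact packedness of $\Spec(S)\setminus\V(J)$ gives $\fq\subseteq\fq_\delta$, and Lemma \ref{0}(2) lifts this back to $\overline{\fq}^f\subseteq\overline{\fq_\delta}^f$. Your remark about reducing to prime $\mathcal P$ via Kaplansky's theorem is the same observation the paper makes after Definition \ref{def}.
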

\begin{proof}
	 By Lemma \ref{cp1}(5), the only case we need
	 to consider is the case
	 $\overline{\fq}^f\subseteq \cup_{\delta \in \Delta}\overline{\fq}^f_\delta$ with
	 $\fq \in \Spec(S)\setminus \V(J)$. Then, by Lemma \ref{cp1}(2),
	$\fq\subseteq \cup_{\delta \in \Delta}\fq_\delta$, and
	our assumption implies
	$\fq\subseteq \fq_\delta$, for some $\delta \in \Delta$.
	Thus 
	$\overline{\fq}^f\subseteq\overline{\fq}^f_\delta$, as desired. 
\end{proof}

The dual notion of  compactly packed rings are
properly zipped rings.
In the following, as our final result, we characterize when
$\rr$ is properly zipped.
% in the case that $f$ is surjective.
The definition is as follows.
\begin{defn}
	We say that a ring $R$ is a \emph{properly zipped} ring if whenever a prime ideal $\fp$ of $R$ contains the intersection of a family $\{\fp_i\}_i$ of prime ideals of $R$, then $\fp_i \subseteq \fp$,  for some $i$.	
\end{defn}

\begin{thm}\label{pz}
	Let $f:R\to S$ be surjective. Then $\rr$ is  a properly zipped ring
	if and only if so is $R$.
\end{thm}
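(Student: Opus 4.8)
The plan is to prove both implications directly using the description of $\Spec(\rr)$ from Remark~\ref{spec} together with the inclusion characterizations in Lemma~\ref{pz1}. Since $f$ is surjective, every prime of $S$ not in $\V(J)$ can be written as $f^{-1}(\fq)$ for an appropriate prime, and more importantly the awkward ``converse'' hypotheses in Lemma~\ref{pz1}(4) are automatically met, so I expect the argument to go through cleanly.

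\textbf{The ``only if'' direction.} Assume $\rr$ is properly zipped. Since $R \cong \rr/(0\bowtie^f J)$ is a homomorphic image of $\rr$, and being properly zipped passes to homomorphic images (a prime of the quotient contains an intersection of primes of the quotient iff the corresponding statement holds upstairs under the correspondence $\V(0\bowtie^f J)$, which consists entirely of type~1 primes), $R$ is properly zipped. I would spell out this last parenthetical: if $\fp \supseteq \bigcap_\alpha \fp_\alpha$ in $R$, then by Lemma~\ref{pz1}(1), $\fp^{\prime_f} \supseteq \bigcap_\alpha \fp_\alpha^{\prime_f}$ in $\rr$; properly zipped gives $\fp_\alpha^{\prime_f} \subseteq \fp^{\prime_f}$ for some $\alpha$, hence $\fp_\alpha \subseteq \fp$ by Lemma~\ref{pz1}(1) again.

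\textbf{The ``if'' direction.} Assume $R$ is properly zipped. Let $\mathcal{P} \in \Spec(\rr)$ contain an intersection $\bigcap_{i} \mathcal{P}_i$ of primes of $\rr$; I must produce an $i$ with $\mathcal{P}_i \subseteq \mathcal{P}$. Split the family $\{\mathcal{P}_i\}$ into its type~1 part $\{\fp_\alpha^{\prime_f}\}_{\alpha}$ and type~2 part $\{\overline{\fq_\beta}^f\}_\beta$, and split into cases on the type of $\mathcal{P}$. When $\mathcal{P} = \overline{\fq}^f$ is type~2: by Lemma~\ref{pz1}(6), $\bigcap_\alpha \fp_\alpha^{\prime_f} \nsubseteq \overline{\fq}^f$ unless the type~1 family is empty, so the intersection over the type~2 primes alone is already contained in $\overline{\fq}^f$; then Lemma~\ref{pz1}(3) translates this to $\bigcap_\beta \fq_\beta \subseteq \fq$ in $S$, and since $S \cong R$ (as $f$ is an isomorphism when surjective with the relevant kernel—actually here I only know $f$ surjective, so $S$ is a quotient of $R$, hence properly zipped) I get $\fq_\beta \subseteq \fq$ for some $\beta$, whence $\overline{\fq_\beta}^f \subseteq \overline{\fq}^f$ by Lemma~\ref{0}(2). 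When $\mathcal{P} = \fp^{\prime_f}$ is type~1: combine the type~1 primes via Lemma~\ref{pz1}(1) and the type~2 primes via Lemma~\ref{pz1}(5) (which needs $f$ surjective only implicitly through $f^{-1}(\cap \fq_\beta + J)$), reducing everything to an intersection of primes of $R$ contained in $\fp$, then apply that $R$ is properly zipped.

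\textbf{Main obstacle.} The subtle point is handling the \emph{mixed} intersection: $\mathcal{P}$ contains $\bigl(\bigcap_\alpha \fp_\alpha^{\prime_f}\bigr) \cap \bigl(\bigcap_\beta \overline{\fq_\beta}^f\bigr)$, and I cannot simply discard one family. The key observation that makes it work is that an arbitrary intersection of type~1 and type~2 primes still has a clean description: using surjectivity of $f$, $\bigcap_\beta \overline{\fq_\beta}^f = \overline{\bigcap_\beta \fq_\beta}^f$-like behaviour and $\bigcap_\alpha \fp_\alpha^{\prime_f} = \bigl(\bigcap_\alpha \fp_\alpha\bigr)^{\prime_f}$, so the whole intersection is controlled by a single ideal of $R$ (via $f^{-1}$) together with a single ideal of $S$. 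I would first prove a small lemma (or extract it from Lemma~\ref{pz1}(5),(6)) giving: $\bigl(\bigcap_\alpha \fp_\alpha^{\prime_f}\bigr) \cap \bigl(\bigcap_\beta \overline{\fq_\beta}^f\bigr) \subseteq \fp^{\prime_f}$ iff $\bigl(\bigcap_\alpha \fp_\alpha\bigr) \cap f^{-1}\bigl(\bigcap_\beta \fq_\beta + J\bigr) \subseteq \fp$, and similarly for containment in a type~2 prime the type~1 part is irrelevant by Lemma~\ref{pz1}(6). Granting that, both cases collapse to an application of the properly zipped hypothesis on $R$ (and on $S$, which is properly zipped as a quotient of $R$), finishing the proof.
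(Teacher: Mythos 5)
Your proposal is correct and follows essentially the same route as the paper: the ``only if'' direction via $R$ being a homomorphic image of $\rr$, and the ``if'' direction by splitting on the type of $\mathcal{P}$, using Lemma \ref{pz1} to translate inclusions, the fact that $S$ is properly zipped as a quotient of $R$, and surjectivity of $f$ to pass from $f^{-1}(\fq_\beta)\subseteq\fp$ to $f^{-1}(\fq_\beta+J)\subseteq\fp$. Your ``small lemma'' for the mixed intersection is just the paper's step of letting the prime absorb one of the two sub-intersections, carried out after descending to $R$ (via primality of $\fp$) instead of before (via primality of $\fp^{\prime_f}$ or $\overline{\fq}^f$), so the two arguments are essentially identical in content.
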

\begin{proof}
	($\Rightarrow$): If $\rr$ is a properly zipped ring, then, so is $R$ since it is  homomorphic image of $\rr$.
	
	($\Leftarrow$):
	Assume that 
	$R$ is properly zipped.
	Let $\mathcal{P}$, $\{\mathcal{P}_\alpha\}_{\alpha \in \Lambda} \in\Spec(\rr)$ and 
	$\cap_{\alpha \in \Lambda}\mathcal{P}_{\alpha} \subseteq \mathcal{P}$.
		We need to show that
		$\mathcal{P}_{\alpha} \subseteq \mathcal{P}$, for some $\alpha \in \Lambda$.
	The argument  splits into two cases:
	
	\textbf{Case 1.} $\mathcal{P}=\fp^{\prime_f}$ with
	$\fp \in \Spec (R)$.
	Arrange $\cap_{\alpha \in \Lambda}\mathcal{P}_{\alpha}$
	such that
	$ (\cap_{\delta \in \Delta}\overline{\fq}^f_\delta) \cap (\cap_{\gamma \in \Gamma} \fp^{\prime_f}_{\gamma}) \subseteq \fp^{\prime_f}$, with
	$\fq_\delta \in \Spec(S)\setminus \V(J)$ and 
	$\fp_\gamma  \in \Spec (R)$.
	Then, either 
	$\cap_{\gamma \in \Gamma} \fp^{\prime_f}_{\gamma} \subseteq \fp^{\prime_f}$
	or
	$ \cap_{\delta \in \Delta}\overline{\fq}^f_\delta\subseteq \fp^{\prime_f}$.
	If the first inclusion occurs, then one easily drives the 
	desired conclusion ($\fp^{\prime_f}_{i} \subseteq \fp^{\prime_f}$) by double use of Lemma \ref{pz1}(1)
	together with the assumption.\\ 
	Now let the second inclusion occur. Then, by Lemma \ref{pz1}(5),
	$f^{-1}(\cap_{\delta \in \Delta}\fq_{\delta} + J) \subseteq \fp$,
	and so
	$\cap_{\delta \in \Delta} f^{-1}(\fq_{\delta}) +f^{-1} (J) \subseteq \fp$.
	Applying the assumption on the inclusion
	``$\cap_{\delta \in \Delta} f^{-1}(\fq_{\delta}) \subseteq \fp$" yields
	$f^{-1}(\fq_{\delta}) \subseteq \fp$, for some 
	$\delta \in \Delta$.
	Therefore, since $f$ is surjective,
	$f^{-1}(\fq_{\delta}+J) = f^{-1}(\fq_{\delta}) +f^{-1} (J) \subseteq \fp$.
	We	now use Lemma \ref{0}(3) to have
	$\overline{\fq}^f_\delta\subseteq \fp^{\prime_f}$, which completes the proof.
	
	\textbf{Case 2.}
	$\mathcal{P}=\overline{\fq}^f$ with
	$\fq \in \Spec(S)\setminus \V(J)$.
	Let 
	$ (\cap_{\delta \in \Delta}\overline{\fq}^f_\delta) \cap (\cap_{\gamma \in \Gamma} \fp^{\prime_f}_{\gamma}) \subseteq \overline{\fq}^f$, with
	$\fq_\delta \in \Spec(S)\setminus \V(J)$ and 
	$\fp_\gamma  \in \Spec (R)$. 
	By Lemma \ref{pz1}(6), 
	$\cap_{\gamma \in \Gamma} \fp^{\prime_f}_{\gamma} \nsubseteq \overline{\fq}^f$,
	and so
	$ \cap_{\delta \in \Delta}\overline{\fq}^f_\delta\subseteq \overline{\fq}^f$.
	By Lemma \ref{pz1}(3)  we have
	$ \cap_{\delta \in \Delta}\fq_\delta\subseteq \fq$.
	Note that $S$ is a properly zipped ring, since
	$f$ is surjective, and so
	$ \fq_\delta\subseteq \fq$, for some $\delta \in \Delta$.
	Lemma \ref{pz1}(3) now gives the desired inclusion
	$\overline{\fq}^f_\delta\subseteq \overline{\fq}^f$.
\end{proof}

Let us mention two  consequences of the theorem.
The first one is direct result of the theorem,
while the second is obtained from the proof of it.
%%%%%%%%%%%%%%%
\begin{cor}\label{dup}
	Let I be an ideal of R. Then $R\bowtie I$ is a properly zipped ring if and only if so is $R$.
\end{cor}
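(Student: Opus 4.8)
The plan is to obtain this statement as an immediate specialization of Theorem \ref{pz}. Recall from the discussion preceding Corollary \ref{pmdup} that, by definition, $R\bowtie I=R\bowtie^{id_R}I$; that is, the amalgamated duplication is exactly the amalgamation $R\bowtie^fJ$ in the particular case $S=R$, $J=I$, and $f=id_R$. The key observation is simply that the identity map $id_R\colon R\to R$ is surjective, so the standing hypothesis of Theorem \ref{pz} is met with this choice of data.

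The main (and essentially only) step is therefore to invoke Theorem \ref{pz}: applied with $S=R$, $f=id_R$, $J=I$, it yields that $R\bowtie I=R\bowtie^{id_R}I$ is a properly zipped ring if and only if $R$ is a properly zipped ring. This is precisely the assertion of the corollary, and no further argument is needed in the generic case.

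The only point that merits a brief comment is the paper's standing convention that $J$ is a non-zero proper ideal of $S$, so strictly the above covers $I$ a non-zero proper ideal of $R$. In the two degenerate cases one checks the equivalence directly: $R\bowtie 0\cong R$, so there is nothing to prove, while $R\bowtie R\cong R\times R$, and a finite direct product of rings is properly zipped exactly when each factor is (the factors are homomorphic images, hence inherit the property; conversely $\Spec(R\times R)=\Spec(R)\sqcup\Spec(R)$ carries no inclusions across the two pieces, and the intersection of a family of primes decomposes componentwise, so the condition on a prime of one factor reduces to the properly zipped condition of that factor).

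I do not anticipate any genuine obstacle here: the entire content resides in Theorem \ref{pz}, and this corollary merely records the self-amalgamation instance $S=R$, $f=id_R$.
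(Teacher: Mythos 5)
Your proof is correct and matches the paper's: Corollary \ref{dup} is stated there as an immediate consequence of Theorem \ref{pz}, obtained exactly as you do by taking $S=R$, $J=I$, $f=id_R$ (which is surjective). Your extra check of the degenerate cases $I=0$ and $I=R$ is fine but not needed under the paper's standing convention that $J$ is a non-zero proper ideal.
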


\begin{cor} \label{pzi}
		Let $J\subseteq \Nil (S)$. Then
		$R\bowtie^f J$ is properly zipped if and only if so is  $R$.
		In particular, for an $R$-module $M$,
		$R\ltimes M$ is properly zipped if and only if so is  $R$.
\end{cor}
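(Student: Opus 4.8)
The strategy is to observe that the hypothesis $J\subseteq\Nil(S)$ makes $\Spec(\rr)$ a faithful order-theoretic copy of $\Spec(R)$, so that proper zippedness transfers for purely formal reasons. First I would invoke Remark \ref{spec}: since $J\subseteq\Nil(S)$, the ring $\rr$ has no type $2$ prime ideals, hence
\[
\Spec(\rr)=\{\fp^{\prime_f}\mid\fp\in\Spec(R)\}.
\]
Thus every prime of $\rr$, and every intersection of primes of $\rr$ that appears below, involves only type $1$ ideals, and the map $\fp\mapsto\fp^{\prime_f}$ is a bijection $\Spec(R)\to\Spec(\rr)$ which, by Lemma \ref{0}(1) and Lemma \ref{pz1}(1), respects both the inclusion order and the relation ``contains a given intersection of primes''.

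For the forward implication I would argue exactly as in the proof of Theorem \ref{pz}: $R$ is a homomorphic image of $\rr$ and proper zippedness passes to homomorphic images, so if $\rr$ is properly zipped then so is $R$. For the converse, assume $R$ is properly zipped and let $\mathcal{P},\{\mathcal{P}_\alpha\}_{\alpha\in\Lambda}\in\Spec(\rr)$ satisfy $\cap_{\alpha\in\Lambda}\mathcal{P}_\alpha\subseteq\mathcal{P}$. Using the structural fact, write $\mathcal{P}=\fp^{\prime_f}$ and $\mathcal{P}_\alpha=\fp_\alpha^{\prime_f}$ with $\fp,\fp_\alpha\in\Spec(R)$; then Lemma \ref{pz1}(1) turns $\cap_{\alpha\in\Lambda}\fp_\alpha^{\prime_f}\subseteq\fp^{\prime_f}$ into $\cap_{\alpha\in\Lambda}\fp_\alpha\subseteq\fp$. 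Proper zippedness of $R$ gives $\fp_\beta\subseteq\fp$ for some $\beta\in\Lambda$, and Lemma \ref{0}(1) then yields $\mathcal{P}_\beta=\fp_\beta^{\prime_f}\subseteq\fp^{\prime_f}=\mathcal{P}$, which is what is needed.

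For the ``in particular'' clause I would recall that $R\ltimes M$ is the amalgamation $\rr$ in the case $J^2=0$; since $J^2=0$ forces every element of $J$ to be nilpotent, we have $J\subseteq\Nil(S)$, so the first part applies verbatim. I do not anticipate any real obstacle: the whole argument rests on the single clean reduction $\Spec(\rr)=\{\fp^{\prime_f}\}$ supplied by Remark \ref{spec} under $J\subseteq\Nil(S)$, after which proper zippedness is simply shuttled back and forth along the bijection $\fp\leftrightarrow\fp^{\prime_f}$ using Lemma \ref{pz1}(1) and Lemma \ref{0}(1); the only thing to be mildly careful about is that this bijection also respects intersections of primes, which is precisely why Lemma \ref{pz1} is invoked rather than merely Lemma \ref{0}.
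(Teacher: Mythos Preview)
Your proof is correct and follows essentially the same approach as the paper: the paper's proof simply notes that $\rr$ has no type~2 primes and refers to the first paragraph of Case~1 in Theorem~\ref{pz}, which is precisely the double application of Lemma~\ref{pz1}(1) you carry out. You are in fact more explicit than the paper in addressing the forward direction and the ``in particular'' clause.
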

\begin{proof}
	$\rr$ does not have any 
	type 2 prime ideals, and the proof is
	just as the first paragraph of the proof of ``Case 1 of Theorem \ref{pz}".
\end{proof}

The next example shows that, in Theorem \ref{pz} the surjective hypothesis
for $f$ is not superfluous.

\begin{exam}
	Let  $\mathbb{Q}$ be the field of rational numbers, $\mathbb{R}$ be the
	field of real numbers and $X$ is an indeterminate over $\mathbb{R}$.
	Let $f:\mathbb{Q}\to \mathbb{R}[X]$ be the natural embedding
	and $J:=X\mathbb{R}[X]$.
	Then $\mathbb{Q} \bowtie^f J$ is isomorphic to
	$\mathbb{Q}+X \mathbb{R}[X]$ (see \cite[Example 2.5]{DFF}).\\
	Clearly $\mathbb{Q}$ is a properly zipped ring.
	But $\mathbb{Q}+X \mathbb{R}[X]$ is not properly zipped,
	since it has infinitely many maximal ideals \cite[Corollary 4.4]{t}.
\end{exam}


\begin{thebibliography}{10}	
	
	
	
	\bibitem{a} Y. Azimi, {\em Constructing  (non-)Catenarian rings}, J. Algebra, {\bf 597}, (2022),  266--274.
	
	\bibitem{ah} Y. Azimi, {\em Hilbert rings with maximal ideals of different heights and unruly Hilbert rings}, Canadian Mathematical Bulletin, \textbf{66}(1), (2022), 196--203.
	

	
	
	\bibitem{ass17} Y. Azimi, P. Sahandi, and N. Shirmihammadi, {\em Cohen-Macaulay properties under the
		amalgamated construction}, J. Commut. Algebra, {\bf 10}, No. 4, (2018), 457--474.
	

	
	\bibitem{ass} Y. Azimi, P. Sahandi, and N. Shirmohammadi, {\em Pr\"ufer conditions under the amalgamated construction}, Comm. Algebra, {\bf 47}, (2019), 2251--2261.
		
		
	\bibitem{b} B. Banaschewski, \emph{pm-rings and the prime ideal theorem}, Topology Appl. \textbf{158},
	(2011), 2340--2342.
		
	\bibitem{c} M. Contessa, \emph{On some properties of pm-rings and mp-rings}, C. R. Math. Rep. Acad. Sci. Canada \textbf{3}, (1981), 357--362.
		
	\bibitem{cc} M. Contessa, \emph{On pm-rings}, Comm. Algebra \textbf{10}, (1982), 93--108.
	
	

     \bibitem{do} G. De Marco, A. Orsatti, \emph{Commutative rings in which every prime ideal is contained in a unique maximal ideal}, Proc. Amer. Math. Soc. \textbf{30}, (1971),
     459--466.
	
	\bibitem{DFF} M. D'Anna, C. A. Finocchiaro, and M. Fontana,
	{\em Amalgamated algebras along an ideal}, in:
	Commutative Algebra and its Applications, Proceedings
	of the Fifth International Fez Conference on Commutative
	Algebra and its Applications, Fez, Morocco, 2008,
	W. de Gruyter Publisher, Berlin, 2009, pp. 155--172.
	
	\bibitem{DFF2} M. D'Anna, C. A. Finocchiaro, and M. Fontana, {\em Properties of chains of 	prime ideals in an amalgamated algebra along an ideal}, J. Pure 	Appl. Algebra, {\bf 214}, (2010), 1633--1641.
	
	\bibitem{DFF1} M. D'Anna, C. A. Finocchiaro,
	and M. Fontana, {\em New algebraic properties of an amalgamated
		algebra along an ideal}, Comm. Algebra, {\bf 44}, (2016),
	1836--1851.
	
	\bibitem{DF} M. D'Anna and M. Fontana,
	{\em An amalgamated duplication of a ring along an ideal: the basic
		properties}, J. Algebra Appl., {\bf 6}, No.3, (2007), 443--459.
	
	\bibitem{f} C. A. Finocchiaro, {\em Pr\"{u}fer-like conditions on an amalgamated algebra along an ideal}, Houston J. Math., {\bf 40}, (2014), 63--79.
	
%	\bibitem{gh} R. Gilmer and W. Heinzer, {\em A non-Noetherian two-dimensional Hilbert domain with principal maximal 	ideals}, Mich. Math. J., \textbf{23}, (1976), 353--362.
	
%	\bibitem{h} W. Heinzer, \emph{Hilbert integral domains with maximal ideals of preassigned height}, J. Alg. \textbf{29}, (1974), 229--231.
	
\bibitem{kap} I. Kaplansky, {\em Commutative Rings}, rev. ed., Univ. of Chicago Press, Chicago, (1974).

	\bibitem{pm} N. Mahdou, A. Mimouni, and M. A. S. Moutui,  \emph{On pm-rings, rings of finite character and h-local rings}, J.
	Algebra Appl. \textbf{13}(6), (2014), DOI: 10.1142/S0219498814500182

	
	\bibitem{m} C. J. Mulvey, \emph{A generalization of Gelfand duality}, J. Algebra, \textbf{56}, (1979), 499--505.
	
	
	\bibitem{Na} M. Nagata, {\em Local Rings},
	Interscience, New York, (1962).
	

	\bibitem{rv} C. Reis, T. Viswanathan, \emph{A compactness property for prime ideals in Noetherian rings}, Proc. Amer. Math. Soc. \textbf{25}, (1970), 353--356.
	
	% \bibitem{t} A. Tarizadeh and J. Chen, Avoidance and absorbance, J. Algebra, 582 (2021) 88-99.
	
	\bibitem{t} A. Tarizadeh, and J. Chen. \emph{Avoidance and absorbance}. J. Algebra, \textbf{582}, (2021), 88--99. 
	
\end{thebibliography}
\end{document}